\DeclareDocumentCommand{\rfaktor}{m O{-0.5} m O{0.5}}{
  	\raisebox{#2\height}{\ensuremath{#1}}
 	 \mkern-5mu\diagdown\mkern-4mu
 	 \raisebox{#4\height}{\ensuremath{#3}}
	}
\newcommand{\norm}[1]{\left\lVert#1\right\rVert}					
\newcommand{\lrangle}[1]{\left\langle#1\right\rangle}					
\newcommand{\cu}[1]{C_u^*\left(#1\right)}							
\newcommand{\cualg}[2]{\mathbb{C}_u^{#1}\left[#2\right]} 			
\DeclareMathOperator{\dd}{d}
\newcommand{\N}{\mathbb{N}}			
\newcommand{\C}{\mathbb{C}}
\newcommand{\D}{\mathbb{D}}
\newcommand{\BB}{\mathscr{B}}
\newcommand{\KK}{\mathscr{K}} 
\newcommand{\HH}{\mathcal{H}} 		
\newtheorem{theorem}{Theorem}[section]     
\newtheorem{lemma}[theorem]{Lemma}
\theoremstyle{remark}
\theoremstyle{definition}
\newtheorem{definition}[theorem]{Definition}
\title{Bounded Derivations on Uniform Roe Algebras}
\author{Matthew Lorentz and Rufus Willett}
\date{}
\begin{document}

\maketitle

\begin{abstract}
We show that if $C^*_u(X)$ is a uniform Roe algebra associated to a bounded geometry metric space $X$, then all bounded derivations on $C^*_u(X)$ are inner.

\end{abstract}

\section{Introduction}
Let $A$ be a $C^*$-algebra.  A \emph{derivation} of $A$ is a linear map $\delta:A\to A$ satisfying $\delta(ab)=a\delta(b)+\delta(a)b$.   In this paper, we always assume that our derivations are defined on all of $A$, and are thus bounded by a fundamental result of Sakai \cite{Sakai:1960aa}.   A derivation $\delta$ of $A$ is \emph{inner} if there exists $d$ in the multiplier algebra $M(A)$ of $A$ such that $\delta(a)=ad-da$ for all $a\in A$.  Let us say that a $C^*$-algebra $A$ \emph{only has inner derivations} if all (bounded) derivations are inner.

Motivated by the needs of mathematical physics and the study of one-parameter automorphism groups, it is interesting to study whether all derivations are inner for a particular $C^*$-algebra.  In the 1970s, a complete solution to this problem was obtained in the separable case via the work of several authors.  The definitive result was obtained by Akemann and Pedersen \cite{Akemann:1979aa}: they showed that a separable $C^*$-algebra has only inner derivations if and only if it isomorphic to a $C^*$-algebra of the form
\begin{equation}\label{nice}
C\oplus \bigoplus_{i\in I} S_i ,
\end{equation}
where $C$ is continuous trace (possibly zero), and each $S_i$ is simple (possibly zero).  Thus in particular, all separable commutative, and all separable simple, $C^*$-algebras only have inner derivations.  However, one might reasonably say that most separable $C^*$-algebras admit non-inner derivations.

For non-separable $C^*$-algebras the picture is murkier.  It is well-known that there are non-separable $C^*$-algebras that are not of the form in line \eqref{nice} and that only have inner derivations: perhaps most famously, Sakai \cite{Sakai:1966aa} has shown this for all von Neumann algebras.  See also for example \cite[page 123]{Elliott:1977aa} for some examples that are not von Neumann algebras, nor of the form in line \eqref{nice}, and that only have inner derivations.  

Our goal in this paper is to give a new class of examples that only have inner derivations: uniform Roe algebras.  Uniform Roe algebras are a well-studied class of non-separable $C^*$-algebras associated to metric spaces; see Section \ref{def sec} below for basic definitions.  They were originally introduced for index-theoretic purposes, but are now studied for their own sake as a bridge between $C^*$-algebra theory and coarse geometry, as well as having interesting applications to single operator theory and mathematical physics, amongst other things.  Due to the presence of $\ell^\infty(X)$ as a diagonal\footnote{In the sense of Kumjian: see \cite{Kumjian:1986aa}} MASA. they have a somewhat von Neumann algebraic flavor, but are von Neumann algebras only in the trivial finite-dimensional case.  They are also essentially never of the form in line \eqref{nice}.  Moreover, in many ways they are quite tractable as $C^*$-algebras, often having good regularity properties such as nuclearity.
 
Here is our main theorem.

\begin{theorem}\label{main}
Uniform Roe algebras associated to bounded geometry metric spaces only have inner derivations.
\end{theorem}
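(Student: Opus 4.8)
The plan is to exploit the canonical MASA $\ell^\infty(X) \subseteq C^*_u(X)$ and reduce the problem to a cohomological vanishing statement. Given a bounded derivation $\delta$ on $A := C^*_u(X)$, the first step is to show that $\delta$ can be perturbed by an inner derivation so as to vanish on the diagonal $\ell^\infty(X)$. This is a standard averaging argument: since $\ell^\infty(X)$ is a MASA, hence in particular an abelian, injective von Neumann algebra, one uses the existence of an invariant mean / conditional expectation to produce an element $d_0 \in A''$ implementing $\delta|_{\ell^\infty(X)}$, and then (with care about where $d_0$ lives — ideally in $M(A)$, using that $C^*_u(X)$ contains the rank-one projections and has a natural approximate unit of such) one replaces $\delta$ by $\delta - \mathrm{ad}(d_0)$. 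After this reduction we may assume $\delta(\chi_E) = 0$ for every subset $E \subseteq X$, in particular $\delta(p) = 0$ for every point-mass projection $p = \chi_{\{x\}}$.

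The second step uses this vanishing to pin down the "matrix coefficients" of $\delta$. For $x,y \in X$ write $p_x, p_y$ for the corresponding rank-one projections and, for $a \in A$, write $a_{xy} = p_x a p_y$. Applying the Leibniz rule to $p_x a p_y$ and using $\delta(p_x) = \delta(p_y) = 0$ gives $\delta(a)_{xy} = \delta(a_{xy})$, so $\delta$ is "entrywise" and moreover $\delta$ maps the matrix unit $e_{xy}$ (a partial isometry in $A$ when $d(x,y)$ is bounded, which it is for finite-propagation operators) to something supported on the $(x,y)$ entry. One then shows $\delta(e_{xy}) = c(x,y) e_{xy}$ for a scalar-valued function $c$, and the derivation identity forces $c$ to be additive along composable pairs: $c(x,z) = c(x,y) + c(y,z)$ whenever the relevant matrix units compose. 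Boundedness of $\delta$ gives a uniform bound on $c$ over pairs at bounded distance. The goal is to produce a single bounded function $b : X \to \mathbb{C}$ with $c(x,y) = b(x) - b(y)$ for all $x,y$ with $d(x,y)$ bounded; then $d := \mathrm{diag}(b) \in \ell^\infty(X) \subseteq M(A)$ implements $\delta$ on all finite-propagation operators, hence on all of $A$ by density and boundedness, completing the proof.

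The main obstacle — and the step I expect to require the real work — is this last passage from the cocycle $c$ to the coboundary $b$: one must solve $b(x) - b(y) = c(x,y)$ globally and boundedly. Additivity lets you define $b$ along a spanning tree (fix a basepoint in each coarse component, set $b$ via paths), but two issues arise: (i) well-definedness requires $c$ to vanish on cycles, which additivity gives formally but one must check the relevant matrix units genuinely compose inside $C^*_u(X)$ and that no convergence subtlety intervenes; (ii) far more seriously, boundedness of $b$ does not follow from boundedness of $c$ on bounded-distance pairs — the naive path construction can drift. This is exactly the point where the bounded geometry hypothesis on $X$ must enter in an essential way (to control the combinatorics of paths, e.g. via a Følner-type or $\ell^\infty$-cohomology-vanishing argument, or by using the structure of $C^*_u(X)$ as an $\ell^\infty(X)$-bimodule and an averaging over a net of finite subsets). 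I would expect the proof to isolate a lemma of the form: every bounded additive cocycle on the "bounded distance" relation on a bounded geometry space is a bounded coboundary — or alternatively to sidestep explicit path-chasing by a Hahn–Banach / mean-on-$\ell^\infty$ argument that directly produces $d \in M(A)$ from $\delta$ and its vanishing on the diagonal, never mentioning $c$ at all. The rest (the diagonal-vanishing reduction and the density argument) I expect to be routine.
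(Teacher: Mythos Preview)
Your proposal has the difficulty located in the wrong place. Step 1 --- perturbing $\delta$ by an inner derivation so that it vanishes on $\ell^\infty(X)$ --- is not a preliminary reduction but the entire theorem. The averaging you describe produces $d_0 = \int_{\mathcal{U}} u^*\delta(u)\,d\mu(u)$, which a priori lies only in the weak-$*$ closure $\BB(\ell^2(X))$, not in $C^*_u(X)=M(C^*_u(X))$; you flag this (``with care about where $d_0$ lives''), but getting $d_0$ into $C^*_u(X)$ is exactly what must be proved. Indeed, once you have $d_0 \in C^*_u(X)$ with $\delta' := \delta - \mathrm{ad}(d_0)$ vanishing on $\ell^\infty(X)$, you are already finished with no need for Step 2: $\delta'$ is still spatially implemented, by $d - d_0$ where $d$ is the original spatial implementor, and since $\delta'$ kills the MASA $\ell^\infty(X)$ we get $d - d_0 \in \ell^\infty(X)' = \ell^\infty(X) \subseteq C^*_u(X)$, hence $d \in C^*_u(X)$. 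Your cocycle analysis is therefore redundant. Moreover, the obstacle you anticipate there is illusory anyway: every matrix unit $e_{xy}$ is rank-one, hence compact, hence lies in $C^*_u(X)$ regardless of $d(x,y)$, so your cocycle $c$ is defined and bounded by $\|\delta'\|$ on \emph{all} pairs, and $b(x):=c(x,x_0)$ is immediately a globally bounded coboundary with no drift issue.

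The paper begins with the same averaging over the unitary group $\mathcal{U}$ of $\ell^\infty(X)$, obtaining $d'=\int_{\mathcal{U}} u^*du\,d\mu(u)\in\ell^\infty(X)$, but then attacks the real problem head-on: showing that $h:=d-d'$ (which equals your $d_0$) lies in $C^*_u(X)$. The key input --- and the place where bounded geometry actually enters --- is a Ramsey-theoretic lemma of Braga and Farah which upgrades the pointwise fact ``$[f,d]\in C^*_u(X)$ for each $f$ in the unit ball of $\ell^\infty(X)$'' to a \emph{uniform} one: for every $\epsilon>0$ there is a single $r>0$ such that every such $[f,d]$ is $\epsilon$-$r$-approximated. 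Since $d-u^*du=u^*[u,d]$, one may then choose propagation-$r$ approximants $a(u)$ with $\|d-u^*du-a(u)\|<\epsilon$ uniformly in $u\in\mathcal{U}$; averaging over $\mathcal{U}$ writes $h$ as an operator of propagation at most $r$ plus an error of norm at most $\epsilon$. As $\epsilon$ is arbitrary, $h\in C^*_u(X)$.
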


The key ingredients in the proof are: a basic form of a  `reduction of cocycles' argument used by Sinclair and Smith \cite{Sinclair:2004aa} in their study of Hochschild cohomology of von-Neumann algebras; and recent applications of Ramsey-theoretic ideas to the study of uniform Roe algebras by Braga and Farah \cite{Braga:2018dz}.

We conclude this introduction by noting that the fact that all derivations on $A$ are inner can be restated as saying that the first Hochschild cohomology group $H^1(A,A)$ vanishes.  For $A$ a (nuclear) uniform Roe algebra, it is then natural to ask if all the higher groups $H^n(A,A)$ vanish.  See \cite{Sinclair:2004aa} for a survey of this problem in the case that $A$ is a von Neumann algebra.

\subsection*{Acknowledgements} 

The authors would like to thank Roger Smith and Stuart White for useful suggestions.  In particular, an earlier version of this paper only proved the main result in the special case that $C^*_u(X)$ is nuclear, and via a more complicated method.  We are grateful to Roger Smith for suggesting we think about the `reduction of cocycle method', which allowed us to both generalize the main result, and simplify its proof.  Both authors were partially supported by NSF grants DMS-1564281 and DMS-1901522.

\section{Definitions and background results}\label{def sec}

In this section, we recall some basic definitions, as well as a classical result of Kadison stating that all derivations on a $C^*$-algebra are spatially implemented.

Inner products are linear in the first variable. For a Hilbert space $\HH$ we denote the space of bounded operators on $\HH$ by $\BB(\HH)$, and the space of compact operators by $\KK(\HH)$.  The commutator of $a,b\in \BB(\HH)$ is denoted by $[a,b]:=ab-ba$.  

The Hilbert space of square-summable sequences on a set $X$ is denoted $\ell^2(X)$, and the canonical basis of $\ell^2(X)$ will be denoted $(\vartheta_x)_{x\in X}$ (we reserve $\delta$ for derivations).  For $a\in \BB(\ell^2(X))$ we define its matrix entries by
$$
a_{xy}:= \lrangle{a\vartheta_y,\vartheta_x}.
$$

\begin{definition}[propagation, uniform Roe algebra]
Let $X$ be a metric space and $r\geq 0$.  An operator $a\in \BB(\ell^2(X))$ \emph{has propagation at most $r$} if $a_{xy}=0$ whenever 
$d(x,y) > r$ for all $(x,y)\in X\times X$.  In this case, we write prop$(a) \leq r$. 
The set of all operators with propagation at most $r$ is denoted $\cualg{r}{X}$.  We define
$$
\cualg{}{X} := \{a\in \BB(\ell^2(X)) : \text{prop}(a) < \infty\};
$$
it is not difficult to see that this is a $*$-algebra.  The \textit{uniform Roe algebra}, denoted $C^*_u(X)$, is defined to be the norm closure of $\C_u[X]$.
\end{definition}

\begin{definition}[$\epsilon$-$r$-\textit{approximated}]
Let $X$ be a metric space.
Given $\epsilon >0$ and $r >0$, 
an operator $a\in \BB(\ell^2(X))$ can be $\epsilon$-$r$-\textit{approximated} if there exists an 
$b \in \cualg{r}{X}$ such that $\norm{a-b}<\epsilon$.
\end{definition}
	
We will exclusively be interested in uniform Roe algebras associated to bounded geometry metric spaces as in the next definition.
	
\begin{definition}[bounded geometry]\label{bg}
A metric space $X$ is said to have \textit{bounded geometry} if for every $r\geq 0$ there exists an $N_r \in \N$ 
such that for all $x\in X$, the ball of radius $r$ about $x$ has at most $N_r$ elements.
\end{definition}

Finally in this section, we recall a general fact about derivations.
	
\begin{definition}[spatial derivation]
Let $A\subseteq \BB(\HH)$ be a concrete $C^*$-algebra.  A derivation $\delta$ of $A$ is \textit{spatial} if there is a bounded operator $d \in\BB(\HH)$ such that $\delta(a) = [a,d]$.
\end{definition}
	
The following is due to Kadison \cite[Theorem 4]{10.2307/1970433}.
	
\begin{theorem}\label{spatial the}
Let $A\subseteq \BB(\HH)$ be a concrete $C^*$-algebra.  Then every derivation on $A$ is spatial. \qed
\end{theorem}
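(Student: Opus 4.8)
This is Kadison's spatiality theorem, so rather than reprove it from scratch I will describe the route I would follow and sketch the one genuinely substantial ingredient. The plan is to reduce to a von Neumann algebra and to invoke the fact that derivations of von Neumann algebras are inner, which I will also sketch so the argument is self-contained. First I would pass from $A$ to its weak closure $M := A''$ inside $\BB(\HH)$. Since $\delta$ is bounded, it extends to a $\sigma$-weakly continuous derivation $\bar{\delta}\colon M\to M$: one defines $\bar{\delta}$ on $\sigma$-weak limits of elements of $A$ — which fill out $M$ by Kaplansky density, the unit ball of $A$ being $\sigma$-weakly dense in that of $M$ — and checks that well-definedness and the Leibniz rule survive the limit using boundedness. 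Crucially, the target of $\delta$ is only required to be a bounded operator, so it is enough to produce $d\in M$ with $\bar{\delta}(m)=[m,d]$ for all $m\in M$; restricting to $A$ then gives $\delta(a)=[a,d]$ with $d\in M\sub\BB(\HH)$, which is exactly spatiality. Note that innerness of $\bar{\delta}$ is an intrinsic statement about the abstract algebra $M$, so proving it in any convenient representation delivers the operator $d$ in the given representation on $\HH$.

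It remains to show that every derivation $\delta$ of a von Neumann algebra $M$ is inner. Applying the Leibniz rule to a product of unitaries shows that $C(u):=\delta(u)u^*$ satisfies the cocycle identity $C(uv)=C(u)+uC(v)u^*$, and one checks that $\delta(u)=[u,d]$ for every unitary $u$ is equivalent to $d$ being a common fixed point of the affine maps $\Phi_u(x):=u^*xu+u^*C(u)u$. These maps form an isometric affine action of the unitary group $\mathcal{U}(M)$ (one verifies $\Phi_u\Phi_v=\Phi_{vu}$ from the cocycle identity), and the orbit of $0$ is contained in the ball of radius $\|\delta\|$. Since the unitaries span $M$, a common fixed point $d$ finishes the proof.

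The main obstacle is precisely this common-fixed-point step: $\mathcal{U}(M)$ is highly non-amenable, so one cannot simply average over it, and a bounded $\sigma$-weakly closed convex invariant subset of $\BB(\HH)$ is $\sigma$-weakly compact but not weakly compact, so the Ryll--Nardzewski theorem does not apply directly. The resolution I would use, following Sakai, is to transport the problem into a Hilbert space and exploit uniform convexity. Put $M$ in standard form (on a possibly different space; innerness is representation-independent, as noted above), with cyclic and separating vector $\Omega$ and modular conjugation $J$. For a unitary $u\in M$ the operator $V_u:=uJuJ$ is a unitary operator on $\HH$ implementing conjugation by $u$ and fixing $\Omega$, and a short computation turns the maps $\Phi_u$ into the affine isometries $\Psi_u(\xi):=V_u\xi-C(u)\Omega$ of $\HH$, with $u\mapsto\Psi_u$ a genuine action whose orbit of $0$ is the bounded set $\{-C(u)\Omega\}$. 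Because $\HH$ is uniformly convex, this orbit has a unique circumcenter $\xi_0$ (the center of its smallest enclosing ball), and since every $\Psi_u$ permutes the orbit it must preserve that ball and hence fix $\xi_0$. Decoding the fixed vector — using that $\Omega$ is separating — yields $d\in M$ with $\delta(m)=[m,d]$ for all $m$. This circumcenter device, which trades non-amenability for uniform convexity, is the heart of the proof; the surrounding reductions are routine.
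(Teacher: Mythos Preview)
The paper does not prove this statement: it is quoted as a theorem of Kadison with a citation, and the paper then remarks that for a concrete $C^*$-algebra containing $\KK(\HH)$ --- the only case actually needed, since every uniform Roe algebra contains the compacts on $\ell^2(X)$ --- a much shorter argument due to Chernoff already gives spatiality. So there is no paper-side proof to compare against, and your route through the full Kadison--Sakai theorem is considerably heavier than what the application requires.

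Your global strategy (extend $\delta$ to $M=A''$ and then use that derivations of von Neumann algebras are inner) is the classical one and is sound in outline. The circumcenter step you propose for Sakai's theorem, however, does not work as written outside the tracial setting. Both the claim that $V_u=uJuJ$ fixes $\Omega$ and the claim that $\Psi_u(\xi)=V_u\xi-C(u)\Omega$ defines a genuine affine action hinge on the identity $Ju\Omega=u^*\Omega$; modular theory only gives $Ju\Omega=\Delta^{1/2}u^*\Omega$, which reduces to $u^*\Omega$ precisely when $\Omega$ is a trace vector. Consequently the argument you sketch establishes innerness for finite von Neumann algebras but not in general. The final ``decoding'' step is also a gap: nothing forces the circumcentre $\xi_0\in\HH$ to lie in $M\Omega$, so one cannot simply read off $d\in M$ from it using that $\Omega$ is separating. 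The standard proofs of Sakai's theorem obtain the implementing element by a minimal-norm argument in the predual (or by a type decomposition), rather than by transporting the problem into the standard Hilbert space in this way.
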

	
Note that a uniform Roe algebra $C^*_u(X)$ always contains the compact operators on $\ell^2(X)$.  For a concrete $C^*$-algebra $A\subseteq \BB(\HH)$ containing the compact operators $\KK(\HH)$, there are simpler proofs of Theorem \ref{spatial the} available: see for example \cite[Corollary 3.4 and Remark on page 284]{Chernoff:1973aa}.

\section{Averaging operators over amenable groups}  

In this section, we summarize some facts we need about averaging operators on a Hilbert space over an amenable group.  Most of this material seems likely to be well-known; however, we could not find convenient references for the facts we wanted, so provide most details here.

Let $G$ be a discrete (possibly uncountable) group.  If $A$ is a complex Banach space, we let $\ell^\infty(G,A)$ denote the Banach space of bounded functions from $G$ to $A$ equipped with the supremum norm; in the case $A=\C$, we just write $\ell^\infty(G)$.  We also equip $\ell^\infty(G,A)$ with the right-action of $G$ defined for $a\in \ell^\infty(G,A)$ and $h,g\in G$ by
$$
(ag)(h):=a(hg).
$$
If $Z$ is any set, a function $\phi:\ell^\infty(G,A)\to Z$ is \emph{invariant} if $\phi(ag)=\phi(a)$ for all $a\in \ell^\infty(G,A)$ and $g\in G$.  Recall that $G$ is amenable if there exists an \emph{invariant mean} on $\ell^\infty(G)$, i.e.\ an invariant function $\phi:\ell^\infty(G)\to \C$ that is also a state. 

Fix now an invariant mean on $\ell^\infty(G)$, which we denote\footnote{The integral notation is meant to be suggestive, but we do not need to, and will not, assign any specific meaning to the `measure' $\mu$.} by
$$
a\mapsto \int_{G}a(g)\dd\mu(g).
$$
Let now $B$ be a complex Banach space with dual $B^*$.  We may upgrade an invariant mean on $\ell^\infty(G)$ to an invariant contractive linear map $\ell^\infty(G,B^*)\to B^*$ in the following way. Let $b\in B$, $g\in G$, and $a\in \ell^\infty(G,B^*)$, and write $\langle b,a(g)\rangle$ for the pairing between $b$ and $a(g)$.  Then the map 
$$
G\to \C,\quad g\mapsto \langle b,a(g)\rangle
$$
is bounded, and so we may apply the invariant mean to get a complex number 
$$
\int_{G}\langle b,a(g)\rangle\dd\mu(g).
$$
It is not difficult to check that the map 
$$
B\to \C, \quad b\mapsto \int_{G}\langle b,a(g)\rangle\dd\mu(g)
$$
is a bounded linear functional on $B$.  We write $\int_{G}a(g)\dd\mu(g)$ for this bounded linear functional.

The following lemma is straightforward: we leave the details to the reader.  

\begin{lemma}\label{avg lem}
With notation as above, the map
$$
\ell^\infty(G,B^*)\to B^*,\quad a\mapsto \int_{G}a(g)\dd\mu(g)
$$
is uniquely determined by the condition 
\begin{equation}\label{avg}
\Big \langle b,\int_{G}a(g)\dd\mu(g)\Big\rangle=\int_{G}\langle b,a(g)\rangle\dd\mu(g)
\end{equation}
for $b\in B$ and $a\in \ell^\infty(G,B^*)$.  It is contractive, linear, invariant, and acts as the identity on constant functions. \qed
\end{lemma}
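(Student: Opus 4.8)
The plan is to verify each assertion by pairing against vectors $b\in B$, thereby reducing everything to the corresponding elementary property of the scalar invariant mean $\phi\colon\ell^\infty(G)\to\C$, $f\mapsto\int_G f(g)\,\dd\mu(g)$. The two ambient facts used throughout are: (i) an element of $B^*$ is determined by its values on $B$, so an identity $\varphi_1=\varphi_2$ in $B^*$ follows once $\langle b,\varphi_1\rangle=\langle b,\varphi_2\rangle$ for every $b\in B$, and moreover $\|\varphi\|_{B^*}=\sup_{\|b\|\leq 1}|\langle b,\varphi\rangle|$; and (ii) $\phi$, being a state on the commutative $C^*$-algebra $\ell^\infty(G)$, is contractive ($|\phi(f)|\leq\|f\|_\infty$) and unital ($\phi(1)=1$), besides being linear and invariant for the right-translation action.

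First I would recall why, for fixed $a\in\ell^\infty(G,B^*)$, the rule $b\mapsto\int_G\langle b,a(g)\rangle\,\dd\mu(g)$ defines an element of $B^*$: it is linear in $b$ since the pairing is linear in its first variable and $\phi$ is linear, and it is bounded since, using contractivity of $\phi$,
$$
\Big|\int_G\langle b,a(g)\rangle\,\dd\mu(g)\Big|\leq\sup_{g\in G}|\langle b,a(g)\rangle|\leq\|b\|\sup_{g\in G}\|a(g)\|=\|b\|\,\|a\|_\infty.
$$
Taking the supremum over $\|b\|\leq 1$ shows simultaneously that $\big\|\int_G a(g)\,\dd\mu(g)\big\|_{B^*}\leq\|a\|_\infty$, which is contractivity of the map. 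Uniqueness is then immediate from (i): \eqref{avg} prescribes $\langle b,\int_G a(g)\,\dd\mu(g)\rangle$ for all $b\in B$, and this pins down the element of $B^*$.

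The remaining properties — linearity in $a$, invariance, and the identity on constants — are all checked the same way: pair against an arbitrary $b\in B$, use the matching property of $\phi$, and invoke (i). For linearity, $\langle b,\int_G(a_1+\lambda a_2)(g)\,\dd\mu(g)\rangle=\int_G\langle b,a_1(g)\rangle\,\dd\mu(g)+\lambda\int_G\langle b,a_2(g)\rangle\,\dd\mu(g)=\langle b,\int_G a_1(g)\,\dd\mu(g)+\lambda\int_G a_2(g)\,\dd\mu(g)\rangle$. For invariance, fix $g_0\in G$: since $(ag_0)(g)=a(gg_0)$, the function $g\mapsto\langle b,(ag_0)(g)\rangle$ is the right $g_0$-translate of $g\mapsto\langle b,a(g)\rangle$, so invariance of $\phi$ yields $\langle b,\int_G(ag_0)(g)\,\dd\mu(g)\rangle=\langle b,\int_G a(g)\,\dd\mu(g)\rangle$. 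For constants, if $a(g)\equiv\psi$ then $\langle b,\int_G a(g)\,\dd\mu(g)\rangle=\langle b,\psi\rangle\,\phi(1)=\langle b,\psi\rangle$, whence $\int_G a(g)\,\dd\mu(g)=\psi$.

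There is no genuine obstacle here; this is exactly the sort of statement one leaves to the reader. The only points deserving a moment's attention are that the state property of the invariant mean is precisely what supplies the contractivity and unitality used above, and that every equality being claimed lives in $B^*$ and is therefore legitimately verified pointwise against $B$ via \eqref{avg}.
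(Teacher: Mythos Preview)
Your proof is correct and is precisely the routine verification the paper has in mind: the lemma is stated with a \qed and the surrounding text explicitly leaves the details to the reader, so there is no paper proof to compare against beyond the intended ``pair with $b\in B$ and reduce to the scalar mean'' strategy you carry out.
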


We will apply this machinery in the case that $B=\mathcal{L}^1(\ell^2(X))$ is the trace class operators on $\ell^2(X)$.  In this case, the dual $B^*$ canonically identifies with $\BB(\ell^2(X))$: indeed, if $\text{Tr}$ is the canonical trace $\mathcal{L}^1(\ell^2(X))$, $b\in \mathcal{L}^1(\ell^2(X))$, and $a\in \BB(\ell^2(X))$, then the pairing inducing this duality isomorphism is defined by
\begin{equation}\label{pair form}
\langle b,a\rangle:=\text{Tr}(ba).
\end{equation} 

We will need some basic lemmas.  The first can be deduced very quickly from the theory of conditional expectations (see for example \cite[Lemma 1.5.10]{Brown:2008qy}); we instead give a slightly longer naive proof.

\begin{lemma}\label{ce prop}
With notation as above, for any $a\in \ell^\infty(G,\BB(\ell^2(X))$ and $c\in \BB(\ell^2(X))$, we have that 
$$
c\int_{G}a(g)\dd\mu(g)=\int_{G}ca(g)\dd\mu(g) \quad \text{and}\quad \int_{G}a(g)\dd\mu(g)c=\int_{G}a(g)c\dd\mu(g)
$$
\end{lemma}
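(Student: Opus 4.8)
The strategy is to reduce the operator identities to scalar identities via the defining property \eqref{avg} of the averaged functional, exploiting the duality $\mathcal{L}^1(\ell^2(X))^* \cong \BB(\ell^2(X))$ given by the trace pairing \eqref{pair form}. Fix $a\in\ell^\infty(G,\BB(\ell^2(X)))$ and $c\in\BB(\ell^2(X))$. To prove $c\int_G a(g)\,\dd\mu(g)=\int_G ca(g)\,\dd\mu(g)$, it suffices by Lemma \ref{avg lem} (uniqueness) to check that both sides pair identically with every trace class operator $b\in\mathcal{L}^1(\ell^2(X))$; that is, it suffices to show
$$
\Tr\!\big(b\,c\!\int_G a(g)\,\dd\mu(g)\big)=\int_G \Tr(b\,c\,a(g))\,\dd\mu(g).
$$
First I would observe that $bc\in\mathcal{L}^1(\ell^2(X))$, since the trace class is a two-sided ideal in $\BB(\ell^2(X))$. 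Then $\langle bc,\int_G a(g)\,\dd\mu(g)\rangle=\int_G\langle bc,a(g)\rangle\,\dd\mu(g)$ by \eqref{avg}, and unwinding the pairing via \eqref{pair form} gives exactly $\int_G \Tr(bca(g))\,\dd\mu(g)$. On the other hand, $\langle b,c\int_G a(g)\,\dd\mu(g)\rangle=\Tr(bc\int_G a(g)\,\dd\mu(g))$ by \eqref{pair form} directly. Hence the two functionals $b\mapsto\langle b,c\int_G a(g)\,\dd\mu(g)\rangle$ and $b\mapsto\int_G\langle b,ca(g)\,\dd\mu(g)\rangle$ agree on all of $\mathcal{L}^1(\ell^2(X))$, and uniqueness in Lemma \ref{avg lem} forces $c\int_G a(g)\,\dd\mu(g)=\int_G ca(g)\,\dd\mu(g)$.

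For the second identity, $\int_G a(g)\,\dd\mu(g)\,c=\int_G a(g)c\,\dd\mu(g)$, the argument is symmetric but uses multiplication on the other side. Given $b\in\mathcal{L}^1(\ell^2(X))$, note $cb\in\mathcal{L}^1(\ell^2(X))$ again by the ideal property, and by the trace identity $\Tr(b\,a(g)\,c)=\Tr(cb\,a(g))=\langle cb,a(g)\rangle$. Applying \eqref{avg} to the function $g\mapsto a(g)$ paired against $cb$, then re-expressing, yields $\int_G\langle b,a(g)c\rangle\,\dd\mu(g)=\int_G\Tr(cba(g))\,\dd\mu(g)=\langle cb,\int_G a(g)\,\dd\mu(g)\rangle=\Tr(cb\int_G a(g)\,\dd\mu(g))=\Tr(b\int_G a(g)\,\dd\mu(g)\,c)=\langle b,\int_G a(g)\,\dd\mu(g)\,c\rangle$. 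Uniqueness again gives the claim. One should check along the way that $g\mapsto ca(g)$ and $g\mapsto a(g)c$ indeed lie in $\ell^\infty(G,\BB(\ell^2(X)))$, which is immediate since $\|ca(g)\|\le\|c\|\,\|a(g)\|$ and similarly on the right, so the averages on the right-hand sides are well-defined.

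The only genuinely delicate point is the bookkeeping with the trace pairing: one must be careful that every operator to which a trace is applied is actually trace class, which is handled by the two-sided ideal property of $\mathcal{L}^1(\ell^2(X))$, and that the cyclicity $\Tr(xy)=\Tr(yx)$ is invoked only in the valid form where one factor is trace class and the other bounded. Beyond that, the proof is a formal manipulation of \eqref{avg} and \eqref{pair form}, with no analytic subtleties — the invariance and contractivity of the averaging map, already recorded in Lemma \ref{avg lem}, are not even needed here, only its uniqueness characterization. I expect the main (very minor) obstacle to be presenting the chain of pairing identities cleanly enough that the cyclic reshuffling of the trace is transparent.
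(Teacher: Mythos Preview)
Your proposal is correct and follows essentially the same argument as the paper: pair both sides against an arbitrary trace-class $b$, use that $bc$ (respectively $cb$) is trace class, and unwind the defining relation \eqref{avg} together with the pairing \eqref{pair form}, invoking trace cyclicity for the right-multiplication case. The paper's proof is exactly this chain of equalities, with the second identity dispatched by the same remark you make about $\Tr(xy)=\Tr(yx)$ when one factor is trace class.
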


\begin{proof}
Using lines \eqref{avg} and \eqref{pair form}, for any $b\in \mathcal{L}^1(\ell^2(X))$, we have 
\begin{align*}
\Big\langle b, c\int_{G}a(g)\dd\mu(g)\Big\rangle & =\text{Tr}\Big(bc\int_{G}a(g)\dd\mu(g)\Big)=\Big\langle bc, \int_{G}a(g)\dd\mu(g)\Big\rangle \\ & =\int_{G}\langle bc,a(g)\rangle\dd\mu(g)=\int_{G} \text{Tr}(bca(g))\dd\mu(g) \\ & =\int_{G}\langle b,ca(g)\rangle\dd\mu(g)  =\Big\langle b, \int_{G}ca(g)\dd\mu(g)\Big\rangle.
\end{align*}
As $b\in \mathcal{L}^1(\ell^2(X))$ was arbitrary, this implies that $c\int_{G}a(g)\dd\mu(g)=\int_{G}ca(g)\dd\mu(g)$.  The other case is similar, using also the trace identity $\text{Tr}(cd)=\text{Tr}(dc)$, which is valid whenever either $c$ or $d$ is trace class.
\end{proof}

The next lemma says that our averaging process behaves well with respect to propagation.  Again, we proceed naively; the key point of the lemma is that the collection of operators in $\BB(\ell^2(X))$ that have propagation at most $r$ is weak-$*$ closed for the weak-$*$ topology inherited from the pairing with $\mathcal{L}^1(\ell^2(X))$.

\begin{lemma}\label{avg prop lem}
With notation as above, if $r\geq 0$ and $a\in \ell^\infty(G,\BB(\ell^2(X)))$ is such that the propagation of each $a(g)$ is at most $r$, then the propagation of $\int_{G}a(g)\dd\mu(g)$ is also at most $r$.
\end{lemma}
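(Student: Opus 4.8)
The plan is to reduce the statement to a fact about individual matrix entries. Fix a pair $(x,y)\in X\times X$ with $d(x,y)>r$; it suffices to show that the $(x,y)$ matrix entry of $\int_{G}a(g)\dd\mu(g)$ vanishes. The key point is that, for a fixed pair $(x,y)$, the functional $\BB(\ell^2(X))\to\C$, $T\mapsto T_{xy}$, is weak-$*$ continuous for the duality with $\mathcal{L}^1(\ell^2(X))$; that is, it is implemented by pairing against a trace class operator. Concretely, I would let $e\in\mathcal{L}^1(\ell^2(X))$ be the rank-one operator defined by $e\xi=\langle\xi,\vartheta_x\rangle\vartheta_y$, and compute the trace in the orthonormal basis $(\vartheta_z)_{z\in X}$ to get $\Tr(eT)=T_{xy}$ for every $T\in\BB(\ell^2(X))$; in the notation of line \eqref{pair form} this reads $\langle e,T\rangle=T_{xy}$.

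With this in hand, I would apply the defining property \eqref{avg} of the averaging map with $b=e$:
$$\Big(\int_{G}a(g)\dd\mu(g)\Big)_{xy}=\Big\langle e,\int_{G}a(g)\dd\mu(g)\Big\rangle=\int_{G}\langle e,a(g)\rangle\dd\mu(g)=\int_{G}a(g)_{xy}\dd\mu(g).$$
Since each $a(g)$ has propagation at most $r$ and $d(x,y)>r$, we have $a(g)_{xy}=0$ for every $g\in G$, so $g\mapsto a(g)_{xy}$ is the zero element of $\ell^\infty(G)$. The invariant mean is linear and acts as the identity on constant functions (Lemma \ref{avg lem}), hence sends the constant function $0$ to $0$. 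Therefore the $(x,y)$ entry of $\int_{G}a(g)\dd\mu(g)$ is $0$, and as $(x,y)$ was an arbitrary pair with $d(x,y)>r$, this operator lies in $\cualg{r}{X}$.

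There is essentially no obstacle here: the only thing requiring care is the bookkeeping in identifying the matrix-entry functional with the pairing against the correct rank-one operator (keeping the roles of $x$ and $y$, and any complex conjugates, straight), but this is a one-line computation once the duality pairing $\langle b,a\rangle=\Tr(ba)$ is fixed. An alternative, slightly more abstract route — the one hinted at in the remark preceding the lemma — would be to note that $\cualg{r}{X}=\bigcap_{d(x,y)>r}\ker(T\mapsto T_{xy})$ is an intersection of kernels of weak-$*$ continuous functionals, hence weak-$*$ closed, and then argue that $\int_{G}a(g)\dd\mu(g)$ belongs to the weak-$*$ closed convex hull of $\{a(g):g\in G\}$; I would still prefer the direct computation above, since it is shorter and avoids having to make that last assertion precise.
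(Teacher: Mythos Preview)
Your proof is correct and follows essentially the same approach as the paper: both identify the $(x,y)$ matrix-entry functional with pairing against the rank-one matrix unit $e_{yx}$ (your $e$), apply the defining property \eqref{avg} to reduce to $\int_G a(g)_{xy}\dd\mu(g)$, and then use that this is the mean of the zero function when $d(x,y)>r$. The only differences are cosmetic --- you describe $e_{yx}$ explicitly as a rank-one operator and add a brief discussion of the alternative weak-$*$ closure argument that the paper mentions but does not carry out.
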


\begin{proof}
Let $e_{xy}\in \mathcal{L}^1(\ell^2(X))$ be the standard matrix unit.  Then one computes using line \eqref{pair form} above that for any $a\in \BB(\ell^2(X))$,
\begin{equation}\label{munit}
\langle e_{yx},a\rangle=\text{Tr}(e_{yx}a)=a_{xy}.
\end{equation}
Using lines \eqref{avg} and \eqref{munit}, we see that
$$
\Big\langle e_{yx},\int_{G}a(g)\dd\mu(g)\Big\rangle= \int_{G}\langle e_{yx}, a(g)\rangle\dd\mu(g)=\int_{G}a(g)_{xy}\dd\mu(g),
$$
where the last expression means the image of the function 
$$
G\to \C, \quad g\mapsto a(g)_{xy}
$$
under the invariant mean.  If $d(x,y)>r$, we have that $a(g)_{xy}=0$ for all $g\in G$, and therefore that $\int_{G}a(g)_{xy}\dd\mu(g)=0$.  Hence by the above computation, 
$$
d(x,y)>r \quad \text{implies} \quad \Big\langle e_{yx},\int_{G}a(g)\dd\mu(g)\Big\rangle=0.
$$
Using line \eqref{munit}, this says that $\int_{G}a(g)\dd\mu(g)$ has propagation at most $r$, so we are done.
\end{proof}

\begin{lemma}\label{com lem}
With notation as above, say that there is a unitary representation $g\mapsto u_g$ of $G$ on $\ell^2(X)$.  For any fixed $d\in \BB(\ell^2(X))$, define $a\in \ell^\infty(G,\BB(\ell^2(X)))$ by $a(g):=u_g^*du_g$.  Then $\int_{G}a(g)\dd\mu(g)$ is in the commutant of the set $\{u_g\mid g\in G\}$.
\end{lemma}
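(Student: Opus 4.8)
The plan is to show directly that $e:=\int_{G}a(g)\dd\mu(g)$ satisfies $u_h^* e\, u_h = e$ for every $h\in G$; since each $u_h$ is unitary this is equivalent to $e u_h = u_h e$, i.e.\ to $e$ lying in the commutant of $\{u_g\mid g\in G\}$. Note first that $a$ genuinely belongs to $\ell^\infty(G,\BB(\ell^2(X)))$, since $\norm{a(g)}=\norm{u_g^* d\, u_g}=\norm{d}$ for all $g$, so the average $e$ is defined.

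First I would use Lemma \ref{ce prop} twice --- once with $c=u_h^*$ on the left, and once with $c=u_h$ on the right (applied to the bounded function $g\mapsto u_h^* a(g)$) --- to move the fixed unitaries across the average:
$$u_h^* e\, u_h = u_h^*\Big(\int_{G}a(g)\dd\mu(g)\Big)u_h = \int_{G} u_h^* a(g)\, u_h \dd\mu(g).$$
Next I would simplify the integrand using that $g\mapsto u_g$ is a group homomorphism and $u_h^*=u_{h^{-1}}=u_h^{-1}$:
$$u_h^* a(g)\, u_h = u_h^* u_g^* d\, u_g u_h = (u_g u_h)^* d\, (u_g u_h) = u_{gh}^* d\, u_{gh} = a(gh),$$
so that $u_h^* e\, u_h = \int_{G} a(gh)\dd\mu(g)$. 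Finally, the function $g\mapsto a(gh)$ is precisely the right translate $ah$ of $a$, in the notation $(ag)(h):=a(hg)$ fixed at the start of the section; by the invariance statement of Lemma \ref{avg lem}, $\int_{G}(ah)(g)\dd\mu(g)=\int_{G}a(g)\dd\mu(g)=e$. Hence $u_h^* e\, u_h = e$ for all $h\in G$, which is what we wanted.

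There is essentially no serious obstacle here: the argument is a bookkeeping exercise combining Lemma \ref{ce prop} (to pull the operators $u_h^{\pm 1}$ in and out of the average) with the invariance of the averaging map. The only points that require a moment's care are confirming that the relevant functions really lie in $\ell^\infty(G,\BB(\ell^2(X)))$ so that Lemma \ref{ce prop} applies, and matching the reindexing $g\mapsto gh$ to the right action $(ag)(h):=a(hg)$ so that invariance is invoked in the correct form.
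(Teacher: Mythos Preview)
Your proof is correct and follows essentially the same approach as the paper's: both arguments combine Lemma \ref{ce prop} (to move unitaries through the average) with the invariance of the averaging map from Lemma \ref{avg lem}. The only cosmetic difference is that you verify $u_h^* e\, u_h = e$ by conjugating and recognizing the integrand as the right translate $a(gh)=(ah)(g)$, whereas the paper verifies $u_h e = e u_h$ by pulling $u_h$ in on the left, making the substitution $k=gh^{-1}$, and pulling $u_h$ back out on the right.
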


\begin{proof}
Let $h\in G$.  Then by Lemma \ref{ce prop},
$$
u_h\int_{G}u_g^*du_g\dd\mu(g)=\int_{G}u_hu_g^*du_g\dd\mu(g)=\int_{G}u_{gh^{-1}}^*du_g\dd\mu(g).
$$
Making the `change of variables' $k=gh^{-1}$ and using right-invariance of the map $a\mapsto \int_{G}a(g)\dd\mu(g)$, this equals
$$
\int_{G}(u_{k})^*du_{kh}\dd\mu(k)=\int_{G}u_{k}^*du_{k}u_h\dd\mu(k).
$$
Using Lemma \ref{ce prop} again we get $\int_{G}u_{k}^*du_{k}u_h\dd\mu(k)=\int_{G}u_{k}^*du_{k}\dd\mu(k)u_h$, so are done.
\end{proof}

\section{Proof of the main result}

In this section, we start by summarizing a fact we need from the recent work of Braga-Farah.  We then prove Theorem \ref{main}.	

To state the result due to Braga and Farah \cite[Lemma 4.9]{Braga:2018dz}, let $\D:=\{z\in \C:|z|\leq 1\}$ denote the closed unit disk, and for a set $I$, let $\D^I$ denote the usual product space of functions $I\to \D$.  We write elements of $\D^I$ as tuples $\overline{\lambda}=(\lambda_i)_{i\in I}$. 

\begin{lemma}\label{rigid}
Let $(X,d)$ be a metric space with bounded geometry, and let $I$ be a countable set.  Suppose that $(a_i)_{i\in I}$ is a family of finite rank operators in $\cu{X}$ such that for every $\overline{\lambda}\in \D^I$ the series $\sum_{i\in I} \lambda_i a_i$ 
converges strongly to an operator $a_{\overline{\lambda}} \in \cu{X}$.
Then for every $\epsilon >0$ there exists $r>0$ such that 
$a_{\overline{\lambda}}$ can be $\epsilon$-$r$-approximated for all $\overline{\lambda}\in \D^I$. \qed
\end{lemma}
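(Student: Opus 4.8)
The plan is to argue by contradiction, reducing everything to two elementary facts plus one combinatorial construction. The two facts are: (i) by definition $\cu{X}$ is the norm closure of $\cualg{}{X}$, so an operator $a$ lies in $\cu{X}$ if and only if for every $\epsilon>0$ it can be $\epsilon$-$r$-approximated for some $r$; and (ii) if $a$ is $\epsilon$-$r$-approximated by $b$ with prop$(b)\le r$, then for any $E,F\sub X$ with $d(E,F)>r$ the block $\chi_F b\chi_E$ vanishes, whence $\norm{\chi_F a\chi_E}=\norm{\chi_F(a-b)\chi_E}<\epsilon$ (here $\chi_E$ is the projection onto $\ell^2(E)$). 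Combining these, to contradict the hypothesis it suffices to produce a single $\overline\lambda\in\D^I$, a fixed $\epsilon'>0$, and pairs $(E_k,F_k)$ with $d(E_k,F_k)\to\infty$ and $\norm{\chi_{F_k}a_{\overline\lambda}\chi_{E_k}}\ge\epsilon'$: then $a_{\overline\lambda}$ is not $\epsilon'$-$r$-approximable for any $r$, so $a_{\overline\lambda}\notin\cu{X}$.

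First I would make two reductions. Since each $a_i$ is finite rank it is a norm limit of operators with finite matrix support; choosing finitely supported $a_i'$ with $\norm{a_i-a_i'}=:\eta_i$ and $\sum_i\eta_i<\epsilon/4$, the series $\sum_i\lambda_i a_i'$ still converges strongly (it differs from $\sum_i\lambda_i a_i$ by a norm-convergent series) and $\norm{a_{\overline\lambda}-a'_{\overline\lambda}}<\epsilon/4$ uniformly in $\overline\lambda$, as $|\lambda_i|\le1$. Each finitely supported $a_i'$ has finite propagation, so the perturbed family still satisfies the hypothesis, and an $(\epsilon/2)$-$r$-approximation of every $a'_{\overline\lambda}$ yields an $\epsilon$-$r$-approximation of $a_{\overline\lambda}$; hence I may assume every $a_i$ has finite matrix support. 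Secondly, if the reaches $\max\{d(x,y):(a_i)_{xy}\ne0\}$ were uniformly bounded by some $p$, then every $a_{\overline\lambda}$ would have propagation at most $p$ (propagation passes to strong limits, since matrix entries depend strongly continuously on the operator) and the conclusion would hold trivially with $r=p$; so I may assume these reaches are unbounded.

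A crucial consequence of the hypothesis is entrywise absolute summability: for fixed $x,y$, strong convergence forces $\sum_i\lambda_i(a_i)_{xy}$ to converge for every $(\lambda_i)\in\D^I$, and choosing $\lambda_i$ to be the conjugate phase of $(a_i)_{xy}$ yields $\sum_i|(a_i)_{xy}|<\infty$. In particular, summing over a finite region, the indices whose support lies inside a fixed finite set contribute a norm-summable, hence negligible, tail. Assuming the conclusion fails for some $\epsilon>0$, for each radius $r$ there is $\overline\mu\in\D^I$ with $a_{\overline\mu}$ not $\epsilon$-$r$-approximable; by strong convergence a finite sub-collection of the $a_i$ already exhibits matrix mass of size $\ge\epsilon/2$ among entries at distance greater than $r$, and using absolute summability to discard the negligible near-region indices I may take this finite witness to be supported outside any prescribed finite region $W$. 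This is what lets me relocate witnesses of non-approximability arbitrarily far out.

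The main obstacle, and the place where the Ramsey-theoretic input of Braga--Farah is essential, is to convert this raw non-approximability into \emph{coherent clean far blocks} of a fixed size $\epsilon'$. Non-approximability at radius $r$ only says that the entries at distance greater than $r$ carry norm at least $\epsilon$; a priori this mass can be diffuse and prone to cancellation (the ghost phenomenon), so it need not localize into a single block $\chi_F\cdot\chi_E$ of fixed norm. To overcome this I would color finite subsets of $I$ by the approximate location and phase pattern of the dominant far entries of the corresponding partial sums, apply the infinite Ramsey theorem to extract an infinite homogeneous set $J\sub I$, and then recursively select from $J$ pairwise disjoint finite blocks of indices supported in successive annuli $W_k\setminus W_{k-1}$, with phases $\lambda_i$ chosen so that the far entries reinforce rather than cancel, producing a local operator $b_k:=\sum_{i\in J_k}\lambda_i a_i$ with $\norm{\chi_{F_k}b_k\chi_{E_k}}\ge\epsilon'$ and $d(E_k,F_k)\to\infty$. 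Because the supports are staged in disjoint annuli, letting $\overline\lambda$ be the tuple equal to the chosen $\lambda_i$ for $i\in\bigcup_k J_k$ and $0$ elsewhere, all cross terms vanish and $\chi_{F_k}a_{\overline\lambda}\chi_{E_k}=\chi_{F_k}b_k\chi_{E_k}$, so $a_{\overline\lambda}$ carries clean far blocks of norm $\ge\epsilon'$ at unbounded distances. By facts (i) and (ii) this forces $a_{\overline\lambda}\notin\cu{X}$, contradicting the hypothesis and completing the proof.
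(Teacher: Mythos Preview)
The paper does not prove this lemma; it is quoted from Braga and Farah (their Lemma~4.9) and closed with a \qed. So there is no in-paper argument to compare against, and the question is whether your sketch stands on its own.

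Your preliminary reductions are fine: replacing each finite-rank $a_i$ by a finitely supported $a_i'$ with $\sum_i\norm{a_i-a_i'}<\epsilon/4$ is legitimate, the perturbed family still satisfies the hypothesis, and the entrywise absolute summability $\sum_i\abs{(a_i)_{xy}}<\infty$ does follow from convergence for all $\overline\lambda\in\D^I$. The genuine gap is exactly the paragraph you yourself flag as ``the place where the Ramsey-theoretic input of Braga--Farah is essential.'' You propose to colour finite subsets of $I$ by ``the approximate location and phase pattern of the dominant far entries,'' but this is not a finite colouring: locations range over the infinite set $X$ and phase patterns over a continuum, so the infinite Ramsey theorem does not apply as stated. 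Even granting a homogeneous $J\sub I$, you have not explained why the operators $a_i$ for $i\in J$ can be staged into spatially disjoint annuli --- homogeneity with respect to a colouring based on ``location'' would tend to make supports \emph{coincide}, not separate. And the earlier step ``a finite sub-family supported outside any prescribed finite $W$ already witnesses mass $\ge\epsilon/2$ at distance $>r$'' is not justified either: what you need is to discard all indices whose support \emph{meets} $W$, and the absolute summability of matrix entries lying \emph{inside} $W$ does not control those, since infinitely many $a_i$ may straddle $W$ with large norm outside it.

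As written, your argument defers its central step back to Braga--Farah, which is precisely what the paper does --- but the paper does so openly, by citation, whereas you present it as a proof. If you want a self-contained argument, the cleaner route (and the one Braga--Farah actually take) is closer to Baire category than to classical Ramsey: one exploits that $\D^I$ is compact metrizable, writes it as a countable union of the sets of $\overline\lambda$ for which $a_{\overline\lambda}$ is $\epsilon$-$n$-approximable, and extracts a basic open set on which the approximation is uniform; unwinding what that means for the finitely many frozen coordinates gives the conclusion directly, without having to localise far mass into clean blocks.
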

	
We are now ready for the proof of Theorem \ref{main}.

\begin{proof}[Proof of Theorem \ref{main}]
Let $\delta:\cu{X}\to \cu{X}$ be a derivation.  Theorem \ref{spatial the} implies that $\delta$ is spatially implemented, so there is $d\in \BB(\ell^2(X))$ such that $\delta(a)=[a,d]$ for all $a\in C^*_u(X)$.  We will show that $d$ is in $C^*_u(X)$.

Let $\mathcal{U}$ be the unitary group of $\ell^{\infty}(X)$, equipped with the discrete topology.  As $\mathcal{U}$ is abelian, it is amenable (see for example \cite[Theorem G.2.1]{Bekka:2000kx}), and so we may fix a right-invariant mean on $\ell^\infty(\mathcal{U})$.  As in Lemma \ref{avg lem} above, this allows us to build a right-invariant, contractive, linear map 
\begin{equation}\label{u avg}
\ell^\infty(\mathcal{U},\BB(\ell^2(X)))\to \BB(\ell^2(X)),\quad a\mapsto \int_{\mathcal{U}} a(u)\dd\mu(u).
\end{equation}
We apply this to the bounded function 
$$
\mathcal{U}\to \BB(\ell^2(X)),\quad u\mapsto u^*du
$$
to get a bounded operator
$$
d':=\int_{\mathcal{U}}u^*du\dd\mu(u)\in \BB(\ell^2(X)).
$$
Using Lemma \ref{com lem} applied to the identity representation of $\mathcal{U}$, $d'$ is in the commutant of $\mathcal{U}$.  As $\mathcal{U}$ spans $\ell^\infty(X)$, and as $\ell^\infty(X)$ is maximal abelian in $\BB(\ell^2(X))$, this implies that $d'$ is in $\ell^\infty(X)$.  To show that $d$ is in $C^*_u(X)$, it therefore suffices to show that $h:=d-d'$ is in $C^*_u(X)$.  

Continuing, let $p_x\in \BB(\ell^2(X))$ be the rank one projection onto the span of the Dirac mass at $x$.  For an element $f$ of the unit ball of $\ell^\infty(X)$ (considered as a multiplication operator on $\ell^2(X)$), write $f$ as a strongly convergent sum 
$$
f=\sum_{x\in X} f(x) p_x.
$$
Then using strong continuity of subtraction, and separate strong continuity of multiplication on bounded sets,
$$
[f,d]=\Big[\sum_{x\in X} f(x)p_x,d\Big]=\sum_{x\in X} f(x)[p_x,d].
$$
On the other hand, by assumption that $\delta$ is a derivation on $C^*_u(X)$, $[f,d]$ is in $C^*_u(X)$ for all $f\in \ell^\infty(X)$. It follows that if we set $I=X$, and if for each $x\in X$ we set $a_x:=[p_x,d]$, then the collection $(a_x)_{x\in X}$ satisfies the assumptions of Lemma \ref{rigid}.  Hence, for every $\epsilon>0$ there exists $r>0$ such that for every $f$ in the unit ball of $\ell^\infty(X)$, the operator $[f,d]$ can be $\epsilon$-$r$ approximated.  In particular, using that any $u\in\mathcal{U}$ has propagation zero and norm one, for any $\epsilon>0$ there exists $r>0$ such that $d-u^*du=u^*[u,d]$ can be $\epsilon$-$r$ approximated.

For each $u\in \mathcal{U}$, we can therefore choose $a(u)$ of propagation at most $r$ such that $b(u):=d-u^*du-a(u)$ has norm at most $\epsilon$.  Note that the functions $a:u\mapsto a(u)$ and $b:u\mapsto b(u)$ are in $\ell^\infty(\mathcal{U},\BB(\ell^2(X)))$.  Hence we may consider their images under the map in line \eqref{u avg}.  Using that the map in line \eqref{u avg} is linear and acts as the identity on constant functions (see Lemma \ref{avg lem}), we see that
\begin{align}\label{h sum}
\int_{\mathcal{U}} a(u)\dd\mu(u)+\int_{\mathcal{U}} b(u)\dd\mu(u) & =\int_{\mathcal{U}} d-u^*du\dd\mu(u)=d-\int_{\mathcal{U}} u^*du\dd\mu(u) \nonumber \\ & =d-d'  =h.
\end{align}
On the other hand, $\int_{\mathcal{U}} a(u)\dd\mu(u)$ has propagation at most $r$ by Lemma \ref{avg prop lem}, and $\int_{\mathcal{U}} b(u)\dd\mu(u)$ has norm at most $\epsilon$ as the map in line \eqref{u avg} is contractive (see Lemma \ref{avg lem}).  In particular, line \eqref{h sum} writes $h$ as a sum of an element of $C^*_u(X)$, and an element of norm at most $\epsilon$.  As $\epsilon$ was arbitrary, $h$ is in $C^*_u(X)$, and we are done.  
\end{proof}

\bibliographystyle{abbrv}
\bibliography{biblio,Generalbib}
\end{document}